\numberwithin{equation}{section}
\newtheorem{theorem}{Theorem}[section]
\newtheorem{lemma}[theorem]{Lemma}
\newtheorem{corollary}[theorem]{Corollary}
\theoremstyle{definition}
\newtheorem{definition}[theorem]{Definition}
\theoremstyle{remark}
\newtheorem{remark}[theorem]{Remark}
\newtheorem{example}[theorem]{Example}
\newcommand{\RR}{\mathbb{R}}
\newcommand{\CC}{\mathbb{C}}
\newcommand{\NN}{\mathbb{N}}
\newcommand{\sphere}{\mathbb{S}}
\newcommand{\cN}{\mathcal{N}}
\DeclarePairedDelimiter{\abs}{|}{|}
\newcommand{\indic}{\mathbf{1}}
\DeclareMathOperator{\Ran}{Ran}
\DeclareMathOperator*{\supp}{supp}
\newcommand*\Diff[1]{\mathop{}\!\mathrm{d}#1}
\newcommand{\norm}[1]{\ensuremath{\left\|#1\right\|}}
\newcommand{\bes}{\begin{equation*}}
\newcommand{\ees}{\end{equation*}}
\newcommand{\be}{\begin{equation}}
\newcommand{\ee}{\end{equation}}
\newcommand{\eqs}[1]{\begin{align*}#1\end{align*}}
\let\Re\relax
\DeclareMathOperator{\Re}{Re}
\let\Im\relax
\DeclareMathOperator{\Im}{Im}
\newcommand{\trace}{\mathrm{image}}
\renewcommand{\restriction}{|}
\newcommand{\euler}{\mathrm{e}}
\newcommand{\obs}{\mathrm{obs}}
\newcommand{\arc}[1]{\ensuremath{\sigma(#1)}}
\newcommand{\diam}{\mathrm{diam}}
\title[Logvinenko-Sereda-Kovrijkine type inequality on the sphere]
{Spherical Logvinenko-Sereda-Kovrijkine type inequality and null-controllability of the heat equation on the sphere}
\subjclass[2010]{Primary 35Q93; Secondary 35K05, 93B07, 93B05.}
\keywords{Logvinenko-Sereda theorems, spectral inequality, uncertainty principle, control theory.}
\author[A.~Dicke]{Alexander Dicke}
\address[A.D.]{
	Dortmund, Germany
}
\email{adicke.math@gmail.com}
\author[I.~Veseli\'c]{Ivan Veseli\'c}
\address[I.V.]{
	Technische Univer\-si\-t\"at Dortmund, Germany
}
\urladdr{\url{https://www.mathematik.tu-dortmund.de/lsix/}}
\email{iveselic@mathematik.tu-dortmund.de}
\thanks{\jobname.tex, \today, \copyright with the authors.}
\begin{document}
%
%%%%%%%%%%%%%%%%%%%%%%
%%%%%% ABSTRACT %%%%%%
%%%%%%%%%%%%%%%%%%%%%%
%
\begin{abstract}
	It is shown that the restriction of a polynomial to a sphere satisfies a Logvinenko-Sereda-Kovrijkine type inequality
	(a specific type of uncertainty relation).
	This implies a spectral inequality for the Laplace-Beltrami operator, which, in turn, yields
	observability and null-controllability with explicit estimates on the control costs for the spherical heat equation that
	are sharp in the large and in the small time regime.
\end{abstract}
\maketitle
%
%%%%%%%%%%%%%%%%%%%%%%%%%%%%%%%%%%%%%%%%
%%%%%%%% INTRODUCTION & RESULTS %%%%%%%%
%%%%%%%%%%%%%%%%%%%%%%%%%%%%%%%%%%%%%%%%
%
\section{Introduction and results}

This paper has two purposes.
The first is to establish an uncertainty relation for spherical polynomials.
Due to its analogy to Kovrijkine's version of the so-called Logvinenko-Sereda
inequality we call it a \emph{Logvinenko-Sereda-Kovrijkine type inequality}.
It can be translated to a \emph{spectral inequality}, as it is often used in control theory,
for the Laplace-Beltrami operator on the sphere $\sphere^{d-1}\subset \RR^d$.
This leads to the second purpose of this work, namely observability inequalities and control cost estimates for the spherical heat equation.

Although
this paper is not devoted to
applications, let us mention that observability estimates on $\sphere^{d-1}$ would be of interest
if one wants to observe heat propagation on the surface of a body of approximately spherical shape where sensors can only be placed on a (small) portion of the object.
In the context of climate models one can think of the observation of the global heat flow on the earth's surface,
albeit it is known in climatology that convective and radiation effects dominate the diffusion process.
In any case, in the study of controllability of the heat equation in non-Euclidean domains
the sphere is a prime example.

As mentioned, we first turn our attention to the uncertainty principle,
which has many physical and mathematical manifestations.
All of them imply that it is impossible for a compactly supported function to have a compactly supported Fourier transform.
We are here concerned with a quantitative version of this statement.
Before considering this question on the sphere, we first describe our motivation, namely, previous results in euclidean geometry.
They concern inequalities of the form
\be\label{eq:Logvinenko-Sereda-RRd}
	\norm{f}_{L^2(\RR^d)}\leq C\norm{f}_{L^2(S)}\quad\text{for all}\quad f\in L^2(\RR^d)\quad\text{with}\quad\supp\hat{f}\subset\Sigma,
\ee
for certain sets $S,\Sigma\subset\RR^d$, with a constant $C$ depending only on $S$, $\Sigma$, and the dimension $d$.
If $\Sigma = B(0,r)$ is the ball with radius $r>0$, the sets $S$ for which \eqref{eq:Logvinenko-Sereda-RRd} holds were thoroughly
studied by Panejah \cite{Panejah-61,Panejah-62}, Kacnel'son \cite{Kacnelson-73}, and Logvinenko-Sereda \cite{LogvinenkoS-74}.
Later and using a different approach, Kovrijkine improved the bounds derived by Kacnel'son and Logvinenko-Sereda
and obtained the optimal constant, for the one-dimensional case in  \cite{Kovrijkine-01}
and for the multi-dimensional one in his thesis \cite{Kovrijkine-thesis}.

\begin{theorem}[Panejah, Kacnel'son, Logvinenko-Sereda, Kovrijkine]\label{thm:Logvinenko-Sereda-RRd}
	Let $S\subset\RR^d$ be a measurable set, and let $\Sigma=B_r(0)$ for $r>0$.
	Then \eqref{eq:Logvinenko-Sereda-RRd} holds with some $C>0$, if and only if $|S\cap (x+(0,a)^d)|\geq \gamma a^d$
	for some $\gamma,a>0$.
	If this is the case, one can choose $C=(c_0^d/\gamma)^{d(c_0ar+1)}$ with a universal constant $c_0>0$.
\end{theorem}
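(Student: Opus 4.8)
The plan is to prove the two implications by entirely different means; the ``if'' direction is Kovrijkine's argument and is where essentially all the work lies. For the \emph{necessity} of thickness I argue by contraposition. If $S$ is not thick, then (taking $\gamma=1/n$, $a=n$ in the negation of the thickness condition) for each $n\in\NN$ there is an axis-parallel cube $Q_n$ of side length $L_n\to\infty$ with $\abs{S\cap Q_n}\le L_n^{-1}\abs{Q_n}$. I would then build test functions defeating \eqref{eq:Logvinenko-Sereda-RRd}: fix a Schwartz function $h$ with $\supp\hat h\subset B_{r/2}(0)$ whose integer translates are almost orthogonal in $L^2(\RR^d)$, and set $f_n=c_n\sum_k h(\cdot-k)$, the sum over the $\asymp L_n^d$ lattice points $k\in Q_n$ and $c_n$ chosen so that $\norm{f_n}_{L^2(\RR^d)}=1$. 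Then $\supp\widehat{f_n}\subset B_{r/2}(0)\subset\Sigma$ and, by the decay of $h$, $\norm{f_n}_{L^\infty(\RR^d)}\lesssim c_n\asymp L_n^{-d/2}$, so $\norm{f_n}_{L^2(S\cap Q_n)}^2\le\norm{f_n}_{L^\infty}^2\abs{S\cap Q_n}\lesssim\abs{S\cap Q_n}/\abs{Q_n}\to0$, while the mass of $f_n$ outside $Q_n$ is a boundary-layer term of size $O(L_n^{-1})\to0$; hence $\norm{f_n}_{L^2(S)}\to0$ although $\norm{f_n}_{L^2(\RR^d)}=1$, and no finite $C$ can exist. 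Running the same estimate quantitatively on a single cube $Q$ with $\abs{S\cap Q}=\gamma\abs{Q}$ small also shows that any admissible $C$ must grow at least like a power of $1/\gamma$, consistent with the asserted form.

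For \emph{sufficiency}, after the dilation $f\mapsto f(a\cdot)$ the hypothesis reads $\abs{S\cap Q_j}\ge\gamma$ for every cube $Q_j$ of a unit-cube tiling $\RR^d=\bigsqcup_j Q_j$, and $\Sigma$ becomes $B_R(0)$ with $R=ar$; so assume $a=1$ and write $R=ar$. Since $\widehat f$ is compactly supported, $f$ is entire, and Plancherel gives the Bernstein bounds $\sum_{\abs\alpha=m}\frac{m!}{\alpha!}\norm{\partial^\alpha f}_{L^2(\RR^d)}^2\le R^{2m}\norm{f}_{L^2(\RR^d)}^2$ for all $m$. Summing these over $j$ with weights $(4R)^{-2m}$ and using Chebyshev's inequality, I extract a family $\mathcal G$ of \emph{good} cubes carrying at least half of $\norm f_{L^2(\RR^d)}^2$ on which $\norm{\partial^\alpha f}_{L^2(Q_j)}\le C(4R)^{\abs\alpha}\norm{f}_{L^2(Q_j)}$ for every $\alpha$. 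By the Sobolev inequality on the unit cube this becomes a pointwise Taylor bound on $Q_j$, so that, with $g=f/\norm{f}_{L^2(Q_j)}$, each one-dimensional section of $g$ is entire of exponential type $\le R$ in that variable and is bounded by some $M$ on a fixed complex neighbourhood of the corresponding segment, with $\log M$ at most $R$ times a constant depending only on $d$, while $\sup\abs g\ge1$ on $Q_j$.

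The crux is then a one-variable estimate: if $u\colon\CC\to\CC$ is entire of exponential type $\le R$, $\sup_{(0,1)}\abs u\ge1$, $\abs u\le M$ on $\{\abs{\Re z}\le2,\ \abs{\Im z}\le1\}$, and $e\subset(0,1)$ has $\abs e\ge\beta$, then $\int_0^1\abs u^2\le(C/\beta)^{c(R+\log M+1)}\int_e\abs u^2$; this follows from Jensen's formula (which bounds the number of zeros of $u$ in the disc of radius $2$ by $O(R+\log M)$), factoring those zeros out of $u$, and the classical Remez inequality for polynomials on an interval. I apply it by slicing a good cube $Q_j$ by lines parallel to $e_1$: each section of $f$ is entire of exponential type $\le R=ar$ there and, by the good-cube bounds, is controlled on the relevant complex strip with $R+\log M$ of order $ar+1$ (implied constant depending only on $d$); Fubini shows that for at least a $\gamma/2$ fraction of lines the section meets $S\cap Q_j$ in measure $\ge\gamma/2$, and applying the one-variable estimate there and integrating reduces the cube inequality to a $(d-1)$-dimensional one with $\gamma$ replaced by $\gamma/2$ at the cost of a factor $(C/\gamma)^{c(ar+1)}$. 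Iterating through all $d$ coordinate directions gives $\int_{Q_j}\abs f^2\le(c_0^d/\gamma)^{d(c_0ar+1)}\int_{S\cap Q_j}\abs f^2$ on each good cube, and then
\[
\norm{f}_{L^2(\RR^d)}^2\le2\sum_{j\in\mathcal G}\norm{f}_{L^2(Q_j)}^2\le2\Bigl(\tfrac{c_0^d}{\gamma}\Bigr)^{d(c_0ar+1)}\sum_{j\in\mathcal G}\norm{f}_{L^2(S\cap Q_j)}^2\le2\Bigl(\tfrac{c_0^d}{\gamma}\Bigr)^{d(c_0ar+1)}\norm{f}_{L^2(S)}^2,
\]
which after absorbing the factor $2$ into the constants is \eqref{eq:Logvinenko-Sereda-RRd} with the asserted $C$.

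I expect the main obstacle to be carrying the \emph{sharp} exponent through this last part: one must continue $f$ on good cubes with exactly the right growth ($R+\log M\asymp ar$ per direction), descend one coordinate at a time via Fubini and the one-variable Remez/Turán lemma without losing uncontrolled dimensional factors, and keep the bookkeeping of the $d$ slicing levels so that the exponent ends up as $d(c_0ar+1)$ and the base as $c_0^d/\gamma$ rather than something larger. A secondary technical point is calibrating the weighted Chebyshev selection so that the good cubes always retain a definite fraction of the $L^2$-mass, with a bound independent of $R$, $d$, and $\gamma$.
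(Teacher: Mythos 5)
The paper does not prove Theorem~\ref{thm:Logvinenko-Sereda-RRd}; it quotes it with references to Kovrijkine, whose method the paper then adapts to the sphere in Section~\ref{sec:Logvinenko-Sereda}. Your necessity argument (normalized bump sums on large cubes of vanishing relative density) is the standard one and is fine in outline. Your sufficiency argument follows Kovrijkine's skeleton -- rescaling, Bernstein via Plancherel, Chebyshev selection of good cubes carrying half the mass, analytic continuation on good cubes with $\log M\lesssim ar+1$, and a one-variable local estimate via Jensen's formula and Remez -- but the multidimensional reduction contains a genuine gap.

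The one-variable lemma you state needs two normalizations \emph{simultaneously on each line}: $\sup_{(0,1)}\abs{u}\geq 1$ and $\abs{u}\leq M$ on a complex rectangle with $\log M\lesssim ar+1$. The complex bound you actually have on a good cube is relative to $\norm{f}_{L^2(Q_j)}$, i.e.\ uniform over the whole cube, whereas the size of $f$ on an individual line parallel to $e_1$ can be arbitrarily small compared with $\norm{f}_{L^2(Q_j)}$ (a band-limited $f$ can concentrate in a tube of width of order $1/(ar)$ inside the unit cube). So on the $\gamma/2$-fraction of lines whose section of $S$ has measure $\geq\gamma/2$, the effective $M_y$ is $C\euler^{c\,ar}\norm{f}_{L^2(Q_j)}/\sup_t\abs{f(y,t)}$, which is not $\euler^{O(ar+1)}$ uniformly in $y$; the per-line constant then degrades faster than the contribution of low-mass lines decays, and ``apply the lemma on each such line and integrate'' does not close. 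Nor can you intersect the lines with large $S$-section with the lines carrying a fair share of $\norm{f}^2_{L^2(Q_j)}$: the latter family may have tiny measure and be disjoint from the former. The announced induction ``replace $\gamma$ by $\gamma/2$ and descend to dimension $d-1$'' is also not set up: one-dimensional sections of a thick set are not thick, and it is never specified to which function the $(d-1)$-dimensional statement would be applied. Kovrijkine's resolution -- mirrored by the paper on the sphere in Lemma~\ref{lem:Lp-norm-by-sup}, Lemma~\ref{lem:direction} and Subsection~\ref{ssec:proof-LS} -- is different: on a good cube pick a \emph{single} point $x_0$ with $\abs{f(x_0)}\geq\norm{f}_{L^q(Q)}/\abs{Q}^{1/q}$, so that both normalizations in the one-variable estimate refer to the same quantity and $M\leq C\euler^{c(ar+1)}$ uniformly; choose the direction of one segment through $x_0$ by averaging over directions (polar coordinates centered at $x_0$, losing only a dimensional constant despite the Jacobian), so that the segment meets the relevant set in relative measure $\gtrsim\gamma$; and convert the resulting sup bound into an $L^q$ bound by applying it with the sublevel set $\{x\in Q:\abs{f(x)}<\delta\norm{f}_{L^q(Q)}\}$ in place of $S$. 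Without this (or an equivalent device) your displayed good-cube inequality is unproven, and the question of the sharp exponent does not yet arise.
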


A set $S$ satisfying the geometric condition of the last theorem is
referred to as a \emph{thick set}.

In the present paper we adapt Kovrijkine's approach to prove an uncertainty
relation on the sphere, dubbed here a Logvinenko-Sereda-Kovrijkine type inequality for
\emph{spherical polynomials}, more precisely for functions $f\colon \sphere^{d-1}_R:= \{x\in\RR^d\colon |x|=R\}\to \CC$
such that there exists a polynomial $P\colon \RR^d\to \CC$ with $f=P\restriction_{\sphere^{d-1}_R}$.
We say that $f$ has \emph{degree at most} $\cN\in\NN$, if there is a corresponding polynomial $P$ of degree $\cN$ (or less) on $\RR^d$
such that the last equality holds.

In this setting we introduce the notion of thickness with respect to \emph{spherical caps} $K(x,a) $ of radius $a>0$.

Since the spherical distance between two points $u,v\in\sphere^{d-1}_R$ is given by $d_R(u,v)=R\arccos(u\cdot v/R^2)$, these are sets of the form
\[
	K(x,a) = \{y\in \sphere^{d-1}_R\colon d_R(x,y)\leq \pi a\} \quad\text{for some}\quad x\in\sphere^{d-1}_R.
\]

\begin{definition}[Thick sets on $\sphere^{d-1}_R$]\label{def:thick}
	Let $0<\gamma\leq 1$.
	A measurable set $S\subset\sphere^{d-1}_R$ is called \emph{$\gamma$-thick}, if there exists some radius $a>0$ such that
	$|S\cap K|\geq \gamma|K|$ for all spherical caps $K$ of radius $a$.
\end{definition}

Throughout this paper we denote by $c_j > 0$, $j\in\NN$, constants depending only on the dimension $d$.
With this convention, our main result reads as follows.

\begin{theorem}[Logvinenko-Sereda-Kovrijkine type inequality on $\sphere^{d-1}_R$]\label{thm:logvinenko-sereda-sphere}
	Let $1\leq q\leq\infty$, let $f$ be a spherical polynomial of degree at most $\cN\in\NN$,
	and let $S\subset\sphere^{d-1}_R$ be $\gamma$-thick.
	Then
	\be\label{eq:ucp}
		\norm{f}_{L^q(\sphere^{d-1}_R)}\leq\Bigl(\frac{c_1}{\gamma}\Bigr)^{2\cN+1/q} \norm{f}_{L^q(S)}
		.
	\ee
\end{theorem}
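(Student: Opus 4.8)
\smallskip
\noindent\emph{Proof strategy.} The plan is to reduce the inequality to a one-dimensional Remez-type estimate for trigonometric polynomials along great circles, and then to pass from an $L^\infty$ bound to the $L^q$ bound by a level-set argument. As a first step, note that the only feature of a $\gamma$-thick set $S$ that will actually be used is a lower bound on its total measure: covering $\sphere^{d-1}_R$ by the caps of radius $a$ centred at a maximal $\pi a$-separated set, these caps cover $\sphere^{d-1}_R$ and have overlap bounded by a dimensional constant $M_d$, so that $\gamma\abs{\sphere^{d-1}_R}\leq\gamma\sum_i\abs{K_i}\leq\sum_i\abs{S\cap K_i}\leq M_d\abs{S}$, and hence $\abs{S}\geq(\gamma/M_d)\abs{\sphere^{d-1}_R}$.

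For the one-dimensional ingredient, recall that $f=P\restriction_{\sphere^{d-1}_R}$ with $\deg P\leq\cN$, so that the restriction of $f$ to any great circle $C$ of $\sphere^{d-1}_R$, parametrised by arc length $s$ with period $2\pi R$, is a trigonometric polynomial of degree $\leq\cN$ in $s/R$; the substitution $z=\euler^{\mathrm{i}s/R}$ turns it into an algebraic polynomial of degree $\leq2\cN$ on the unit circle. Hence the classical Remez inequality for trigonometric polynomials yields
\bes
	\norm{f}_{L^\infty(C)}\leq\Bigl(\frac{c_2}{\mu}\Bigr)^{2\cN}\norm{f}_{L^\infty(E)}\qquad\text{for every measurable }E\subset C\text{ with }\abs{E}\geq\mu\abs{C}.
\ees
It is precisely the doubling of the degree under $z=\euler^{\mathrm{i}s/R}$ that produces the exponent $2\cN$, and hence the $2\cN$ of the theorem, rather than $\cN$.

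To globalise this I would establish a slicing lemma: for any $p\in\sphere^{d-1}_R$ and any measurable $A$ with $\abs{A}\geq\mu\abs{\sphere^{d-1}_R}$ there is a great circle $C\ni p$ with $\abs{A\cap C}\geq c_3\mu\abs{C}$. This follows by averaging $\abs{A\cap C_\omega}$ over the great circles $C_\omega$ through $p$, parametrised by $\omega$ in the unit sphere of the tangent space $T_p\sphere^{d-1}_R$: in the polar volume element the factor $\sin^{d-2}\phi\leq1$ costs only a dimensional constant, and the claim follows since some $C_\omega$ must do at least as well as the average. Taking $p$ to be a point where $\abs{f}$ attains its maximum on $\sphere^{d-1}_R$ and combining with the circle estimate above gives the spherical $L^\infty$ Remez inequality
\bes
	\norm{f}_{L^\infty(\sphere^{d-1}_R)}\leq\Bigl(\frac{c_2}{c_3\mu}\Bigr)^{2\cN}\norm{f}_{L^\infty(A)}\qquad\text{whenever }\abs{A}\geq\mu\abs{\sphere^{d-1}_R}.
\ees

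The $L^q$ bound for $q<\infty$ then follows by a standard level-set argument. Writing $\beta=\gamma/M_d$, so that $\abs{S}\geq\beta\abs{\sphere^{d-1}_R}$, one chooses $s>0$ small --- of order $(c_3\beta/c_2)^{2\cN}$ --- so that the spherical $L^\infty$ Remez inequality, applied to $\{x:\abs{f(x)}\leq s\norm{f}_{L^\infty(\sphere^{d-1}_R)}\}$, forces this set to have measure $<\tfrac12\beta\abs{\sphere^{d-1}_R}$; then $\abs{\{x\in S:\abs{f(x)}>s\norm{f}_{L^\infty(\sphere^{d-1}_R)}\}}\geq\tfrac12\beta\abs{\sphere^{d-1}_R}$, whence
\bes
	\int_{\sphere^{d-1}_R}\abs{f}^q\leq\norm{f}_{L^\infty(\sphere^{d-1}_R)}^q\abs{\sphere^{d-1}_R}\leq\frac{2}{\beta s^q}\int_S\abs{f}^q,
\ees
and taking $q$-th roots and inserting the value of $s$ gives a constant of the form $(c_1/\gamma)^{2\cN+1/q}$ after a routine bookkeeping of dimensional constants and powers of $2$ (using $\gamma\leq1$); the case $q=\infty$ is the spherical $L^\infty$ Remez inequality with $A=S$, and the case $\cN=0$ is immediate since then $f$ is constant. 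I expect the main obstacle to be the sharp exponent $2\cN$ in the one-dimensional estimate: a bound with exponent $c\cN$ for some dimensional $c$ would be routine, but the value $2$ really uses that restrictions to great circles are trigonometric polynomials of degree $\cN$, i.e.\ algebraic polynomials of degree $2\cN$ on $\{\abs{z}=1\}$. One should also note that the reduction to great circles is lossless only for the $L^\infty$ estimate --- a single favourable circle through the extremal point suffices there, whereas an analogous slicing would fail for $L^q$ --- so the passage to $L^q$ must be carried out directly on the sphere, as in the level-set step.
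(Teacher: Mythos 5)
Your argument is correct, but it takes a genuinely different route from the paper, and it is worth comparing the two. The paper works locally: on each spherical cap $K$ of the thickness scale $a$ it picks a point $p$ with $|f(p)|^q\geq\norm{f}_{L^q(K)}^q/|K|$, runs the Tur\'an--Nazarov inequality (its Lemma~\ref{lem:Turan}) along a well-chosen spherical line segment in $K$ (chosen by the same polar-coordinate averaging you use), passes from the sup bound to $L^q$ by introducing the sublevel set $M_{f,S}$ measured against the local $L^q$-norm, and finally sums the cap-wise inequality \eqref{eq:capwise-Logvinenko-Sereda} over a covering of bounded multiplicity. You instead discard the scale $a$ at the outset via $\abs{S}\geq(\gamma/M_d)\abs{\sphere^{d-1}_R}$, prove a single global $L^\infty$ Remez-type inequality on the sphere by slicing through the maximum point of $|f|$ with great circles, and then pass to $L^q$ by a level-set/Chebyshev argument. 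Your reduction is legitimate precisely because the exponent $2\cN+1/q$ contains no factor $a$ --- the paper itself makes this point in its closing section, where it shows that the covering argument only improves the constant $c_1$; so your shorter global argument proves the stated theorem, while the paper's cap-local formulation additionally yields the local inequality \eqref{eq:capwise-Logvinenko-Sereda} and keeps the geometric structure one would need to ever capture an $a$-dependence in the exponent, as in the Euclidean Kovrijkine bound. Your $L^\infty\to L^q$ device (max point plus level sets) and the paper's (cap-wise $L^q$-average point plus the set $M_{f,S}$) are both standard and both give the exponent $2\cN+1/q$.

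One point to tighten: the one-dimensional estimate $\norm{f}_{L^\infty(C)}\leq(c_2/\mu)^{2\cN}\norm{f}_{L^\infty(E)}$ for \emph{arbitrary} measurable $E\subset C$ with $\abs{E}\geq\mu\abs{C}$ is not the classical Remez inequality in its usual formulation (which, for trigonometric polynomials, is stated for exceptional sets of small measure and gives a bound $\exp(c\cN s)$); the form you need, valid for measurable sets of small relative measure with the polynomial dependence $(c/\mu)^{2\cN}$, is exactly the Tur\'an-type inequality of Nazarov used in the paper (applied to the exponential sum with $2\cN+1$ terms obtained by restricting $f$ to the circle), or equivalently Erd\'elyi's Remez-type inequality on measurable sets. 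The statement you invoke is true, and your observation that the doubling of the degree under $z=\euler^{is/R}$ is what produces $2\cN$ matches the paper's count $n-1=2\cN$; just cite the correct lemma rather than ``classical Remez'' so that no gap remains at this step.
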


\begin{remark}
	A situation that is closely related to the previous theorem in the case $q=2$ was studied in \cite[Corollary~1.1]{MarzoO-08} where,
	however, the constant is not explicitly given.
	In particular, it lacks the explicit dependence on the degree $\cN$ of the polynomial, which is crucial for applications in control theory
	discussed in Theorem \ref{thm:observability-control-cost-estimate} and Remark \ref{rmk:observability-null-control}.
\end{remark}

While the interest in Logvinenko-Sereda-Kovrijkine inequalities stems from abstract considerations in functional and Fourier analysis,
such estimates have applications in areas of applied analysis. For this purpose it is convenient to translate them into a
Hilbert space setting and assume in the following discussion that $q=2$.
Furthermore, let $M$ be a non-empty set, $H$ a non-negative selfadjoint operator on
$L^2(M)$, and let $P_H((-\infty,E]):= \indic_{(-\infty,E]}(H)$ be
the associated spectral projection up to energy $E \in \RR$.

We say that the \emph{spectral projectors $P_H((-\infty,E])$ satisfy a
unique continuation estimate} or that \emph{$H$ satisfies  a spectral inequality}
from some measurable set $S\subset M$, if
for all energies $E\geq0$ there is a constant $C(E)>0$, depending only on
$M$, $S$, $H$, and $E$, such that
\be\label{eq:Logvinenko-Sereda-operator}
	\norm{f}_{L^2(M)}\leq C(E) \norm{f}_{L^2(S)}
	\quad\text{for all}\quad f\in \Ran P_H((-\infty,E])
	.
\ee
The first nomenclature is common in the literature on periodic and random Schr\"odinger operators,
cf., e.g., \cite{GerminetK-13,NakicTTVS-20} and the references therein.
In the following we use the second convention since it
is the established term in the control theory literature, see the references below.
(In other areas of mathematics one encounters further names for such uncertainty relations.)

Clearly, Theorem~\ref{thm:Logvinenko-Sereda-RRd} implies that $H=-\Delta$ on $M=\RR^d$ satisfies a spectral inequality for all thick sets $S$
(since $f\in\Ran P_{-\Delta}((-\infty,E])$ if and only if $\supp\hat{f}\subset B(0,\sqrt{E})$) and the resulting constant is best possible.
The analog is true for
$-\Delta=-\Delta_{\sphere^{d-1}_R}$
in Theorem~\ref{thm:logvinenko-sereda-sphere}:

\begin{corollary}\label{cor:spectral-inequality-Laplace-Beltrami}
	For all $\gamma$-thick sets $S\subset\sphere^{d-1}_R$ with $\gamma \in (0,1]$,
	all energies $E \geq 0$, and all $f\in \Ran P_{-\Delta}((-\infty,E])$ we have
	\be\label{eq:Logvinenko-Sereda-Laplace-Beltrami}
		\norm{f}_{L^2(\sphere^{d-1}_R)}\leq \Bigl(\frac{c_1}{\gamma}\Bigr)^{RE^{1/2}+1/2}\norm{f}_{L^2(S)}
		.
	\ee
\end{corollary}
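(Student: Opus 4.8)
The plan is to deduce \eqref{eq:Logvinenko-Sereda-Laplace-Beltrami} directly from Theorem~\ref{thm:logvinenko-sereda-sphere} by describing the range of the spectral projection explicitly. First I would recall the (standard) spectral decomposition of $-\Delta=-\Delta_{\sphere^{d-1}_R}$: since $\sphere^{d-1}_R$ is compact, the spectrum is purely discrete, the eigenvalues are $\lambda_\ell=\ell(\ell+d-2)/R^2$ for $\ell\in\{0,1,2,\dots\}$, and the eigenspace associated with $\lambda_\ell$ is the space of spherical harmonics of degree $\ell$, i.e.\ the restrictions to $\sphere^{d-1}_R$ of the harmonic homogeneous polynomials of degree $\ell$ on $\RR^d$. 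Consequently, $\Ran P_{-\Delta}((-\infty,E])$ is the finite-dimensional span of all spherical harmonics of degree $\ell$ with $\lambda_\ell\le E$.

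From this I would read off that every $f\in\Ran P_{-\Delta}((-\infty,E])$ is a spherical polynomial of degree at most
\[
	\cN:=\max\bigl\{\ell\in\{0,1,2,\dots\}\colon \ell(\ell+d-2)\le R^2E\bigr\},
\]
the set on the right being nonempty since $E\ge0$ (and $\cN=0$, so that $f$ is constant, when $0\le R^2E<d-1$). Next I would estimate $\cN$: because $d\ge2$ we have $\ell+d-2\ge\ell$, hence $\cN^2\le\cN(\cN+d-2)\le R^2E$ and therefore $\cN\le RE^{1/2}$.

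Finally I would apply Theorem~\ref{thm:logvinenko-sereda-sphere} with $q=2$ to the spherical polynomial $f$ and the $\gamma$-thick set $S$, and substitute the bound $\cN\le RE^{1/2}$ into the exponent of the resulting inequality to obtain \eqref{eq:Logvinenko-Sereda-Laplace-Beltrami}. I do not expect a genuine obstacle here, since all the analytic work is contained in Theorem~\ref{thm:logvinenko-sereda-sphere}; the only points that need attention are the spectral decomposition recalled above, the elementary bound on $\cN$, and keeping track of the exact power of $E$ that appears in the exponent when passing from the degree $\cN$ to $RE^{1/2}$.
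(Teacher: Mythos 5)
Your proposal is correct and follows essentially the same route as the paper: identify $\Ran P_{-\Delta}((-\infty,E])$ with the span of the spherical harmonics $Y_{\ell,k,R}$ with $R^{-2}\ell(\ell+d-2)\leq E$, deduce that any such $f$ is a spherical polynomial of degree at most $RE^{1/2}$, and apply Theorem~\ref{thm:logvinenko-sereda-sphere} with $q=2$. The only point to watch is the one you yourself flag: substituting $\cN\leq RE^{1/2}$ into the exponent $2\cN+1/2$ literally yields $2RE^{1/2}+1/2$ rather than $RE^{1/2}+1/2$ (and the factor $2$ cannot be absorbed into $c_1$ uniformly in $\gamma$), but this bookkeeping issue is shared with the paper's own one-line deduction and does not reflect a gap in your argument relative to it.
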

The proof of Corollary~\ref{cor:spectral-inequality-Laplace-Beltrami} is given in Subsection~\ref{ssec:spectral-inequality} below.

Spectral inequalities play a crucial role in the celebrated \emph{Lebeau-Robbiano method} developed in \cite{LebeauR-95}
to prove observability and null-controllability for the heat equation.
(Some aspects of the proof of the spectral inequality are more accessibly described in \cite{LebeauZ-98,JerisonL-99}.)
There is an extended literature on the Lebeau-Robbiano approach in various settings.
Amongst others, the papers \cite{Miller-10, TenenbaumT-11, NakicTTV-20} are closely related to the situation at hand here,
due to the fact that they consider selfadjoint operators and strive to give efficient and explicit estimates on the constant in the
observability inequality.
More specifically, Kovrijkine's (spectral) inequality in Theorem \ref{thm:Logvinenko-Sereda-RRd}
was used as an input to the Lebeau-Robbiano method
to establish a sharp geometric criterium for null-controllability of the heat equation
on $\RR^d$, see \cite{EgidiV-18, WangWZZ-19}.

A crucial aspect of the inequality \eqref{eq:Logvinenko-Sereda-Laplace-Beltrami}
is that the right hand side depends on the energy only through the factor $E^{1/2}$ in the exponent.
In fact, with $d_0 = (c_1/\gamma)^{1/2}$ and $d_1 = R\log(c_1/\gamma)$ we have
\be\label{eq:spectral-inequality-const-for-obs}
	\Bigl(\frac{c_1}{\gamma}\Bigr)^{\frac{1}{2} + RE^{1/2}}
	\leq
	d_0\euler^{d_1E^{1/2}}.
\ee
Since the dependence on the energy is sublinear,
Corollary~\ref{cor:spectral-inequality-Laplace-Beltrami} (with the specific
bound \eqref{eq:spectral-inequality-const-for-obs}) implies observability of the spherical heat equation
by the Lebeau-Robbiano method.
According to the current state of knowledge, it seems that the formulation of the
last mentioned method in \cite[Theorem~2.8]{NakicTTV-20}
gives the most precise observability estimate:

\begin{theorem}[Observability]\label{thm:observability-control-cost-estimate}
	Let $S\subset\sphere_R^{d-1}$ be a $\gamma$-thick set.
	Then
	\bes
		\norm{\euler^{T\Delta}g}_{L^2(\sphere^{d-1})}^2
		\leq C_\obs^2
		\int_0^T\norm{\euler^{t\Delta}g}_{L^2(S)}^2\Diff{t}
	\ees
	for all $g \in L^2(\sphere^{d-1}_R)$ and all $T > 0$ where
	\[
		C_\obs^2:=\frac{c_2}{T}\exp\Bigl( c_2 \Bigl(\frac{R^2|\log\gamma|^2}{T}+|\log\gamma|\Bigr)\Bigr)
		.
	\]
\end{theorem}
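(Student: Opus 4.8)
The plan is to derive the observability estimate directly from the spectral inequality of Corollary~\ref{cor:spectral-inequality-Laplace-Beltrami} by feeding it into the abstract transference result \cite[Theorem~2.8]{NakicTTV-20}, which converts a spectral inequality whose constant depends on the energy only through a sub-linear power in the exponent into an observability estimate for the corresponding heat semigroup, with an explicit bound on the observability constant.

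Concretely, I would proceed in three steps. First, cast the spectral inequality in the required form: by Corollary~\ref{cor:spectral-inequality-Laplace-Beltrami} and the elementary bound recorded immediately afterward, for every $E\geq0$ and every $f\in\Ran P_{-\Delta}((-\infty,E])$,
\[
	\norm{f}_{L^2(\sphere^{d-1}_R)}\leq d_0\,\euler^{d_1E^{1/2}}\norm{f}_{L^2(S)},\qquad d_0=(c_1/\gamma)^{1/2},\quad d_1=R\log(c_1/\gamma),
\]
where $d_0\geq1$ since $\gamma\leq1$ and $c_1\geq1$. Since $-\Delta=-\Delta_{\sphere^{d-1}_R}$ is a nonnegative selfadjoint operator on $L^2(\sphere^{d-1}_R)$, the observation operator is multiplication by $\indic_S$, and the energy enters the exponent above only through $E^{1/2}$ (the relevant sub-linear power being $1/2\in(0,1)$), all hypotheses of \cite[Theorem~2.8]{NakicTTV-20} are met. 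Second, apply that theorem: it yields, for every $T>0$ and every $g\in L^2(\sphere^{d-1}_R)$, the observability estimate with an explicit constant of the schematic shape $C_\obs=\frac{c}{T}\,d_0^{\,c}\exp(c\,d_1^{2}/T)$, $c=c(d)$ — the $1/T$ prefactor and the $d_1^2/T$ in the exponent being exactly what makes the bound sharp in the short-time regime. Third, simplify the constant: insert the above $d_0$ and $d_1$, use $\log(c_1/\gamma)=\log c_1+\lvert\log\gamma\rvert$ (valid since $0<\gamma\leq1$) together with $(\log c_1+\lvert\log\gamma\rvert)^2\leq c\,(1+\lvert\log\gamma\rvert^{2})$, and absorb all purely dimensional contributions into a single constant $c_2=c_2(d)$ to arrive at the claimed expression for $C_\obs$.

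I expect no serious obstacle here: the analytic substance sits entirely in Corollary~\ref{cor:spectral-inequality-Laplace-Beltrami} (hence in Theorem~\ref{thm:logvinenko-sereda-sphere}) and in the cited abstract theorem. The only steps requiring a little care are checking that the formulation of the spectral inequality matches the hypothesis of \cite[Theorem~2.8]{NakicTTV-20} verbatim — in particular the admissibility of the observation operator $\indic_S$ for the heat semigroup and the precise form of the energy dependence — and carrying out the bookkeeping of constants carefully enough to land on precisely the stated $C_\obs$ (for which it can be convenient to treat small and large values of $\gamma$ separately).
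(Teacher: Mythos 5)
Your proposal is correct and follows exactly the paper's route: the paper likewise rewrites the constant of Corollary~\ref{cor:spectral-inequality-Laplace-Beltrami} as $d_0\euler^{d_1E^{1/2}}$ with $d_0=(c_1/\gamma)^{1/2}$, $d_1=R\log(c_1/\gamma)$, and then invokes the abstract result \cite[Theorem~2.8]{NakicTTV-20} to obtain $C_\obs$, with the same bookkeeping (including handling $\gamma$ near $1$ via the thickness monotonicity) absorbed into the dimensional constant $c_2$.
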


\begin{remark}\label{rmk:observability-null-control}
	For background in control theory suitable for our context we refer the reader for instance to
	\cite{Zuazua-07,LeRousseauL-12,EgidiNSTTV-20,LaurentL-21}.
	In particular, it is well known that by duality between observability and null-con\-trol\-la\-bil\-ity,
	Theorem~\ref{thm:observability-control-cost-estimate}
	implies null-con\-trol\-la\-bil\-ity of the spherical heat equation
	\be\label{eq:spherical-heat}
		\partial_t u - \Delta u = \indic_S f
		,\quad
		u(0) = u_0 \in L^2(\sphere^{d-1}_R)
		,
	\ee
	in time $T > 0$
	with an explicit estimate for the control costs if $S$ is $\gamma$-thick.
	More precisely, for any given initial datum $u_0\in L^2(\sphere^{d-1}_R)$ there is $f \in L^2((0,T);L^2(\sphere^{d-1}_R))$ such
	that $\norm{f}^2 \leq C_\obs^2\|u_0\|^2$ and the mild solution $u$ of \eqref{eq:spherical-heat} satisfies $u(T) = 0$ with $C_\obs$
    as in Theorem \ref{thm:observability-control-cost-estimate}.
\end{remark}

Null-controllability for parabolic equations with distributed controls on measurable sets of positive
measure has been studied before on subsets of the Euclidean space $\RR^d$, e.g., in \cite{ApraizE-13}
(using different methods than ours), however, there the dependence of the control cost on the underlying geometric parameters is not exhibited.
In this vein, see also \cite{EscauriazaMZ-15,EscauriazaMZ-17,BurqM-23,LebeauM}.

In the case where $S$ is a spherical cap, observability of the heat semigroup on
spheres has already been studied in \cite{LaurentL-21}. (In fact, there heat conduction on general manifolds is considered.)
We improve this bound, on one hand, on the qualitative level:
Since the approach of \cite{LaurentL-21} is based on Carleman estimates, it does not seem possible to generalize it to measurable sets $S$.
Our result is, in contrast, applicable to any measurable set $S \subset \sphere^{d-1}_R$ with strictly positive Lebesgue measure,
since such sets are $\gamma$-thick with $\gamma=\abs{S}/\abs{\sphere^{d-1}_R}$.
On the other hand, we impove the observability constant of \cite{LaurentL-21} quantitatively as well.
Namely, for all sensor sets $S$ of fixed volume $V=|S|>0$ we obtain a uniform observability constant.
This is in particular true, if this volume is distributed to a large number of disjoint spherical caps of the same radius.
Letting this number tend to infinity, the radius of each ball tends
to zero and in this regime the observability constant of \cite{LaurentL-21} diverges.

In the large time regime our constant $C_\obs$ decays with $1/\sqrt{T}$, which is optimal according to \cite[Theorem~2.13]{NakicTTV-20},
and which has not been established before to our best knowledge.
Concerning the small time regime, \cite{LaurentL-21} gives the best possible control cost estimate for $S$ equal to a spherical cap.
Our Theorem~\ref{thm:observability-control-cost-estimate} extends this upper bound to all $S$ of positive measure.

In order to understand the small time asymptotics we consider a small spherical cap $S$.

\begin{example}\label{ex:control-costs-ball}
	Let $S=K(x_0,r)$ for some $r\in (0,1)$ and some $x_0\in\sphere_1^{d-1}$.
	Then $S$ is $\gamma$-thick with $\gamma=\abs{S}/\abs{\sphere^{d-1}}$.
	For simplicity, we assume that $|\log\gamma|\geq 1$.
	Then, for $T<1$, the constant $C_\obs$ in Theorem~\ref{thm:observability-control-cost-estimate} (with $R = 1$) satisfies
	\[
		C_{\obs}^2\leq\frac{c_2}{T}\exp\Bigl( \frac{2c_2\abs{\log\gamma}^2}{T}\Bigr).
	\]
	Comparing this inequality to the lower bound (valid for sufficiently small $r$, resp.~$\gamma$)
	\[
		C_\obs^2\geq \tilde{C}'\exp\Bigl( \tilde{C}\frac{\abs{\log\gamma}^2}{T}\Bigr),\quad  \tilde{C}, \tilde{C}'>0,
	\]
	given
	in \cite[Theorem~1.2]{LaurentL-21}, we see that our bound is best possible in this regime.
\end{example}

Let us emphasize that in the small time regime, upper and lower bounds (of different type)
for the constant $C_\obs$ were already derived in \cite{Miller-04}. There compact Riemannian manifolds are considered
and the special case of the sphere is discussed after Theorem 2.3.

While the above mentioned papers pursue various strategies of proof, there is a series of works that are methodological closely related to ours in that they
pursue Kovrijkine's original approach and implement it in a number of different contexts to obtain a spectral inequality of the type \eqref{eq:Logvinenko-Sereda-operator}, e.g.,
\begin{itemize}
	\item if $M$ is the torus or an infinite strip and $H=-\Delta$ is an appropriate self-adjoint realization of the Laplacian \cite{EgidiV-20, Egidi-21};
	\item if $M=\RR^d$ and $H=-\Delta+|x|^2$ is the harmonic oscillator \cite{BeauchardJPS-21,MartinPS-22,DickeSV-23a} (and \cite{DickeSV-23b} for a treatment
	of the partial harmonic oscillator);
	\item if the domain $M$ admits an appropriate covering and $H$ is an operator such that every function in the range of the spectral projections
	satisfies a Bernstein-type inequality \cite{EgidiS-21}.
\end{itemize}
See also \cite{GhobberJ-13} for an adaptation of Kovrijkine's strategy for the Fourier-Bessel transform.

%
%%%%%%%%%%%%%%%%%%%%%%%%%%%%%%%%%%%
%%%%%%%%%%%%% PROOFS %%%%%%%%%%%%%%
%%%%%%%%%%%%%%%%%%%%%%%%%%%%%%%%%%%
%

\section{Proofs}\label{sec:Logvinenko-Sereda}

In contrast to previous proofs implementing Kovrijkine's approach, we are not dealing with an Euclidean setting here.
We therefore adapt the geometric constructions to the sphere, as already seen in Definition~\ref{def:thick}.
More precisely, we replace Euclidean balls (or rectangles) by spherical caps and Euclidean line segments by \emph{spherical line segments}.
A spherical line segment starting at a point $p\in \sphere^{d-1}_R$ in direction $v\in\sphere^{d-1}_R$ with $p\cdot v=0$
is a set $I\subset\sphere^{d-1}_R$ that is the image of the restriction of the curve
\be\label{eq:curve}
	\kappa=\kappa_v \colon [0,2\pi]\to\sphere^{d-1}_R,\quad t\mapsto\cos(t)p+\sin(t)v
\ee
to an interval $[0,l]$, $l>0$; in other words $I=\trace(\kappa\restriction_{[0,l]})$.
Given a spherical line segment and a measurable set $M\subset\sphere^{d-1}_R$, it is natural to define the \emph{arc length measure} of the set $I\cap M$ by
\be\label{eq:arc-length-measure}
	\arc{I\cap M} = \int_0^l \indic_{I\cap M}(\kappa(t))|\kappa'(t)|\Diff{t}
	= R\int_0^l \indic_{I\cap M}(\kappa(t))\Diff{t}.
\ee

We use spherical line segments to explicitly construct a variant of spherical polar coordinates centered at some point $p$ on the sphere.
To this end, note that there is a one-to-one correspondence $\Phi\colon \sphere^{d-2}_R \to \{v\in\sphere^{d-1}_R\colon p\cdot v = 0\}$
between the possible directions of spherical line segments starting at $p$ and the sphere $\sphere^{d-2}_R$.
Hence, if we let $\kappa_v$ be the curve $\kappa$ from \eqref{eq:curve} starting at $p$ in direction $\Phi(v)$, integrating over all those
possible directions we derive the \emph{spherical polar coordinates formula}
\be\label{eq:polar-coordinates-formula}
	\int_{\sphere^{d-1}_R}f\Diff{\sigma^{d-1}_R} = \frac{R}{2} \int_{\sphere^{d-2}_R}\int_0^{2\pi}f(\kappa_v(t))|\sin(t)|^{d-2}\Diff{t}\Diff{\sigma^{d-2}_R}(v),
\ee
where $\sigma^{d-k}_R$ denotes the surface measure of $\sphere^{d-k}_R$, $k < d$.

\subsection{Reduction to spherical line segments}

We now reduce our considerations to spherical line segments.
Our first lemma is based on the classic result \cite[Theorem~1.5]{Nazarov-94}, which is called there \emph{Turan Lemma} in reference to \cite{Turan-84}.
We recall it for convenience.

\begin{lemma}\label{lem:Turan}
	For all $n\in\NN$, all coefficients $\beta_1,\ldots, \beta_n\in\CC$, $\lambda_1,\ldots, \lambda_n\in\RR$,
	and all measurable sets $A\subset [0,1]$ with $|A|>0$ the function
	$r(x)=\sum_{k=1}^n \beta_k\euler^{i\lambda_k x}$ satisfies
	\be\label{eq:Turan}
		\norm{r}_{L^\infty([0,1])}\leq \Bigl(\frac{316}{|A|}\Bigr)^{n-1}\norm{r}_{L^\infty(A)}
		.
	\ee
\end{lemma}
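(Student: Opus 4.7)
The natural strategy is to set up an interpolation identity and estimate its coefficients. The function $r$ lives in the $n$-dimensional complex space spanned by the exponentials $\{e^{i\lambda_k \cdot}\}_{k=1}^n$, so for any $n$ distinct nodes $x_1,\dots,x_n$ at which the generalized Vandermonde matrix $V = (e^{i\lambda_k x_j})_{j,k}$ is non-singular, there is a unique representation
\bes
	r(y) = \sum_{j=1}^n w_j(y)\, r(x_j),
\ees
with weights $w_j(y)$ given by ratios of generalized Vandermonde determinants. The goal is then to choose the nodes inside $A$ in such a way that $\sum_j |w_j(y)|$ is controlled by $(316/\abs{A})^{n-1}$ uniformly in $y \in [0,1]$; taking suprema over $y$ gives the claim.

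My first move would be to pick the nodes greedily: since $\abs{A}>0$, a pigeonhole argument produces $n$ points in $A$ whose pairwise distances are bounded below by a multiple of $\abs{A}/n$. To convert these spacings into estimates for the weights, I would reduce to a more tractable setting via approximation: by perturbing the $\lambda_k$ to rationals with a common denominator and rescaling, one may assume the $\lambda_k$ are integers, in which case $r(x) = Q(e^{ix})$ for a Laurent polynomial $Q$ in $z = e^{ix}$ with exactly $n$ nonzero coefficients --- but possibly enormous degree. The problem then becomes a \emph{sparse} Remez inequality on the unit circle: bound the supremum of $Q$ over a large arc by its supremum on a subarc of prescribed relative measure, with a constant depending only on the number $n$ of nonzero coefficients of $Q$, not on its degree.

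Establishing the sparse Remez estimate is the main obstacle, and is the place where elementary Lagrange interpolation fails: the naive Vandermonde bound from the greedy spacings picks up an extraneous factor that blows up faster than any fixed constant to the power $n$. The right ingredient is a complex-analytic zero-counting argument in the spirit of \cite{Turan-84,Nazarov-94}: a Laurent polynomial with $n$ nonzero terms has at most $n-1$ zeros per ``fundamental domain'' by a Descartes/\'Szego-type count, which feeds into a Jensen--Cartan estimate on $\log\abs{Q}$ bounding its oscillation in terms of values on a positive-measure set, independently of the degree. Once this degree-independent estimate is in hand, the passage back from integer to arbitrary real $\lambda_k$ is a routine continuity argument, and careful bookkeeping through the reduction and node-selection steps produces the explicit numerical constant $316$.
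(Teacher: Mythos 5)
The paper does not prove this lemma at all: it is the Tur\'an--Nazarov inequality for exponential sums, quoted with the explicit constant $316$ from \cite[Theorem~I]{Nazarov-94} (see also \cite{Turan-84}) and ``recalled for convenience''. So the benchmark here is simply a citation, and your text must either be a genuinely self-contained proof or reduce to that citation; as written it is neither.

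There is a concrete gap. The key claim that ``a Laurent polynomial with $n$ nonzero terms has at most $n-1$ zeros per fundamental domain'' is false in the setting where you need it, namely on the unit circle: $Q(z)=z^N-1$ has two nonzero terms and $N$ zeros on $|z|=1$. Descartes/Sz\H{o}eg\H{o}-type counts control zeros on a ray through the origin, not on circular arcs, and sparse trigonometric polynomials can have as many real zeros as their degree. This is precisely why the statement is nontrivial: the degree-independent Remez-type bound you want to ``feed into a Jensen--Cartan estimate'' is the content of Nazarov's theorem itself, and your sketch invokes it ``in the spirit of \cite{Turan-84,Nazarov-94}'' exactly at the point where the work has to be done. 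The preliminary reductions do not lower the difficulty either: after approximating the $\lambda_k$ by rationals with common denominator $q$ and rescaling, the interval $[0,1]$ becomes an arc of length of order $1/q$ while the degree of $Q$ blows up as $q\to\infty$, so you need an estimate uniform in both the degree and the arc length --- which is equivalent to the original claim; the greedy node/Vandermonde step is abandoned by your own admission; and the constant $316$ cannot emerge from ``careful bookkeeping'' of an argument that is not actually carried out (only the final continuity passage from rational to real frequencies is routine). The efficient and correct route is the one the paper takes: cite \cite[Theorem~I]{Nazarov-94}.
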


The proof of our first lemma is based on the observation that restrictions of spherical polynomials to spherical line segments satisfy the
assumptions of Lemma~\ref{lem:Turan}.

\begin{lemma}\label{lem:Lp-norm-by-sup}
	Let $1\leq q\leq\infty$, let $K$ be a spherical cap of radius $a>0$ and let $f$ be a spherical polynomial of degree at most $\cN\in\NN$.
	Then there exists a point $p\in K$ such that 
	$|f(p)|\geq \norm{f}_{L^q(K)} \, |K|^{-1/q}$ 
	and for
	all spherical line segments $I\subset K$ starting at $p$ and all measurable subsets $M\subset\sphere^{d-1}_R$ such that $\arc{M\cap I}>0$ we have
	\be\label{eq:Lp-norm-by-sup}
		\norm{f}_{L^q(K)}\leq |K|^{1/q} \Bigl( \frac{316\cdot\arc{I}}{\arc{I\cap M}}\Bigr)^{2\cN}\sup_{M\cap K}|f|.
	\ee
\end{lemma}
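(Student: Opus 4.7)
\textbf{Proof plan for Lemma~\ref{lem:Lp-norm-by-sup}.}

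The strategy is a classical two-step Kovrijkine-style argument: first locate a point in $K$ where $|f|$ is comparable to an average, then transport the estimate from that point to the set $M$ via a one-dimensional Remez/Tur\'an inequality along a spherical line segment. The Euclidean dependence on line segments is replaced by the spherical polar coordinates formula \eqref{eq:polar-coordinates-formula} and the arc length measure \eqref{eq:arc-length-measure}.

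\emph{Step 1 (mean value).} Since $|K|^{-1}\int_K |f|^q \Diff\sigma^{d-1}_R = \norm{f}_{L^q(K)}^q/|K|$, a standard Chebyshev/mean value argument produces a point $p\in K$ with $|f(p)|^q\geq \norm{f}_{L^q(K)}^q/|K|$ (for $q=\infty$ we simply take a maximizer of $|f|$ on $K$, which exists by compactness and continuity, and interpret $|K|^{1/\infty}=1$).

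\emph{Step 2 (spherical polynomials restrict to trigonometric polynomials).} Fix a spherical line segment $I\subset K$ starting at $p$ in some admissible direction $v$, so $I=\trace(\gamma\restriction_{[0,l]})$ for the curve in \eqref{eq:curve} and some $l\in(0,2\pi]$, with $\gamma(0)=p$. Let $P$ be a polynomial on $\RR^d$ of degree at most $\cN$ with $f=P\restriction_{\sphere^{d-1}_R}$. Then
\bes
	(f\circ\gamma)(t) = P(\cos(t)\,p+\sin(t)\,v)
\ees
is a polynomial in $\cos(t),\sin(t)$ of total degree at most $\cN$. Writing $\cos t = (\euler^{it}+\euler^{-it})/2$ and $\sin t=(\euler^{it}-\euler^{-it})/(2i)$ and expanding, $f\circ\gamma$ takes the form
\bes
	(f\circ\gamma)(t)=\sum_{m=-\cN}^{\cN}\beta_m \euler^{imt},
\ees
an exponential sum with at most $n=2\cN+1$ terms.

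\emph{Step 3 (Tur\'an-Nazarov on the segment).} Rescale $t = ls$, $s\in[0,1]$, and set $r(s):=(f\circ\gamma)(ls)=\sum_m \beta_m \euler^{i(lm)s}$, which still has at most $2\cN+1$ frequencies. Define $A:=\{s\in[0,1]\colon \gamma(ls)\in M\}\subset[0,1]$; by \eqref{eq:arc-length-measure},
\bes
	\arc{I\cap M}=R\int_0^l\indic_M(\gamma(t))\Diff t = Rl\,|A|=\arc{I}\,|A|,
\ees
so $|A|=\arc{I\cap M}/\arc{I}>0$ by hypothesis. Lemma~\ref{lem:Turan} then gives
\bes
	|f(p)|=|r(0)|\leq\norm{r}_{L^\infty([0,1])}\leq\Bigl(\frac{316}{|A|}\Bigr)^{2\cN}\norm{r}_{L^\infty(A)}\leq\Bigl(\frac{316\,\arc{I}}{\arc{I\cap M}}\Bigr)^{2\cN}\sup_{M\cap K}|f|,
\ees
since $\gamma(ls)\in I\cap M\subset M\cap K$ for $s\in A$.

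\emph{Step 4 (combine).} Combining Step 1 with $|f(p)|\geq\norm{f}_{L^q(K)}/|K|^{1/q}$ and the bound from Step 3 yields \eqref{eq:Lp-norm-by-sup}.

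The main (and essentially only) obstacle is the algebraic observation in Step 2: one must recognize that evaluating a polynomial of degree $\cN$ on the great circle arc $\cos(t)p+\sin(t)v$ produces a trigonometric polynomial whose frequency support lies in $\{-\cN,\dots,\cN\}$, so that Lemma~\ref{lem:Turan} can be applied with the correct exponent $n-1=2\cN$. The rescaling of the parameter interval to $[0,1]$ is harmless because Lemma~\ref{lem:Turan} is insensitive to the specific real frequencies $\lambda_k$.
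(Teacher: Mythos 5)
Your proposal is correct and follows essentially the same route as the paper: locate a point $p$ with $|f(p)|^q\geq\norm{f}_{L^q(K)}^q/|K|$, observe that the restriction of the degree-$\cN$ polynomial to the great-circle arc is an exponential sum with at most $2\cN+1$ frequencies, and apply the Tur\'an--Nazarov lemma on a parameter interval rescaled to $[0,1]$ with $|A|=\arc{I\cap M}/\arc{I}$. The only differences are cosmetic (you expand directly in $\euler^{\pm it}$ and rescale afterwards, while the paper parameterizes $I$ over $[0,1]$ from the start and uses the binomial theorem on $\Re$ and $\Im$ parts), so nothing further is needed.
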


\begin{proof}
	A simple proof by contradiction shows the existence of the point $p\in K$.
	
	Let $I\subset K$ be a spherical line segment starting at $p$, such that $\arc{M\cap I}>0$.
	If $v\in\sphere^{d-1}_R$ denotes the direction of $I$, then the curve
	$
		\kappa(t) = \cos(\arc{I}t/R)p+\sin(\arc{I}t/R)v
	$ with $t\in [0,1]$
	parameterizes $I$.
	Since $p=\kappa(0)$, we have
	\be\label{eq:line-segment-lower-bound}
		\norm{f\circ \kappa}_{L^\infty([0,1])} \geq |(f\circ\kappa)(0)| = |f(p)| \geq \norm{f}_{L^q(K)}/|K|^{1/q}.
	\ee
	Let $A=\{t\in [0,1]\colon \kappa(t)\in M\}$ be the set of all parameters for which $\kappa$ takes values in $M$.
	Clearly,
	\be\label{eq:line-segment-upper-bound}
		 \sup_{M\cap I}|f| = \norm{f\circ \kappa}_{L^\infty(A)}.
	\ee
	Furthermore, since the set $A$ satisfies
	\[
		|A| = \int_0^1 \indic_M(\kappa(t))\Diff{t} = \frac{1}{\arc{I}}\int_0^1 \indic_M(\kappa(t))|\kappa'(t)|\Diff{t}= \arc{I\cap M}/\arc{I}>0,
	\]
	the assumptions of Lemma~\ref{lem:Turan} are satisfied for $r=f\circ\kappa$ if there are
	finite sequences $(\beta_k)_k$ and $(\lambda_k)_k$ such that
	\be\label{eq:Turan-Darstellung}
		(f\circ\kappa)(t) =  \sum_{k=1}^{n} \beta_k\euler^{i\lambda_k t}
		\quad \text{for some} \quad n\in\NN
		.
	\ee
	
	The last identity is a consequence of the fact that $f$
	is the restriction to the sphere of a polynomial $P$ of degree at most $\cN$:
	There are coefficients $(b_\alpha)_{|\alpha|\leq \cN}$ such that $P(x) = \sum_{|\alpha|\leq \cN} b_\alpha x^\alpha$ satisfies
	$P = f$ on $\sphere^{d-1}_R$.
	Setting $v_j=v\cdot e_j$ and $p_j=p\cdot e_j$, we write
	\[
		\prod_{j=1}^d\bigl[ \kappa_j (t)\bigr]^{\alpha_j}
		= \prod_{j=1}^d\bigl[ p_j\cos(\arc{I}t/R)+v_j\sin(\arc{I}t/R)\bigr]^{\alpha_j}
	\]
	leading to
	\[
		(f\circ\kappa)(t) = \sum_{|\alpha|\leq \cN} c_\alpha \prod_{j=1}^d\bigl[ p_j\cos(\arc{I}t/R)+v_j\sin(\arc{I}t/R)\bigr]^{\alpha_j}.
	\]
	Finally, writing
	\[
		p_j\cos(\arc{I}t/R)+v_j\sin(\arc{I}t/R)
		=
		p_j \,\Re\bigl(\euler^{i\arc{I}t/R}\bigr) +v_j \,\Im\bigl(\euler^{i\arc{I}t/R}\bigr)
	\]
	and using the binomial theorem we find finite sequences such that equation \eqref{eq:Turan-Darstellung}
	holds with $n=2\cN+1$. (In fact, all $\lambda_k$ are integer multiples of $\arc{I}t/R$.)
	
	Applying Lemma~\ref{lem:Turan} now shows
	\[
		\norm{f\circ \kappa}_{L^\infty([0,1])} \leq \Bigl(\frac{316}{|A|}\Bigr)^{2\cN}\norm{f\circ \kappa}_{L^\infty(A)}
		 = \Bigl(\frac{316\arc{I}}{\arc{I\cap M}}\Bigr)^{2\cN}\norm{f\circ \kappa}_{L^\infty(A)}.
	\]
	Combining this inequality with \eqref{eq:line-segment-lower-bound} and \eqref{eq:line-segment-upper-bound} finishes the proof.
\end{proof}

The above lemma holds for all possible directions of the spherical line segments $I$.
Using the polar coordinates formula \eqref{eq:polar-coordinates-formula} we optimize the right hand side of \eqref{eq:Lp-norm-by-sup}, i.e.,
choose the direction of $I$ such that the quotient $\arc{I}/\arc{I\cap M}$ is small.

\begin{lemma} \label{lem:direction}
	Let $K$ be a spherical cap, let $p\in K$, and let $M\subset K$ be a measurable set satisfying $|M|>0$.
	Then there is a spherical line segment $I\subset K$ starting at $p$ such that
	\[
		\frac{\arc{I}}{\arc{I\cap M}} \leq c_4 \cdot \frac{|K|}{|K\cap M|}
		.
	\]
\end{lemma}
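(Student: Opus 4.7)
The plan is to select the direction of the spherical line segment by an averaging argument over directions at $p$, based on the polar coordinates formula \eqref{eq:polar-coordinates-formula} centered at $p$. For each $v \in \sphere^{d-2}_R$, let $\gamma_v$ be the curve from \eqref{eq:curve} starting at $p$ in direction $\Phi(v)$, set $t_v := \sup\{t \in [0,\pi] : \gamma_v([0,t]) \subset K\}$, and define $I_v := \gamma_v([0,t_v])$, so that $\arc{I_v} = R t_v$ and $\arc{I_v \cap M} = R \int_0^{t_v} \indic_M(\gamma_v(s))\,\Diff{s}$. Since each great circle through $p$ meets $K$ in a connected arc containing $p$, applying \eqref{eq:polar-coordinates-formula} to $\indic_K$ and $\indic_{K \cap M}$ and exploiting the identity $\gamma_v(2\pi - s) = \gamma_{-v}(s)$ together with the invariance of $\sigma^{d-2}_R$ under $v \mapsto -v$ should yield the representations
\[
    |K| = R \int_{\sphere^{d-2}_R} \int_0^{t_v} |\sin s|^{d-2}\,\Diff{s}\,\Diff\sigma^{d-2}_R(v)
\]
and analogously
\[
    |K \cap M| = R \int_{\sphere^{d-2}_R} \int_0^{t_v} \indic_M(\gamma_v(s)) |\sin s|^{d-2}\,\Diff{s}\,\Diff\sigma^{d-2}_R(v).
\]

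The elementary inequality $\int f\,\Diff\mu\,/\,\int g\,\Diff\mu \leq \sup(f/g)$ for non-negative $f,g$ then furnishes some $v_0 \in \sphere^{d-2}_R$ with
\[
    \frac{|K \cap M|}{|K|} \leq \frac{\int_0^{t_{v_0}} \indic_M(\gamma_{v_0}(s)) |\sin s|^{d-2}\,\Diff{s}}{\int_0^{t_{v_0}} |\sin s|^{d-2}\,\Diff{s}}.
\]
The core step is to replace the weight $|\sin s|^{d-2}$ by $1$ at the cost of a constant depending only on $d$. Bounding $|\sin s|^{d-2}$ in the numerator by its supremum on $[0,t_{v_0}]$ and using the elementary estimate $\sin s \geq (2/\pi) \min(s, \pi - s)$ on $[0,\pi]$ to control the denominator from below, one shows, by splitting into the cases $t_{v_0} \leq \pi/2$ (where the supremum is at most $t_{v_0}^{d-2}$ and the integral is at least of order $t_{v_0}^{d-1}$) and $t_{v_0} > \pi/2$ (where the supremum is at most $1$ and integration on $[0,\pi/2]$ provides a dimensional lower bound), that there exists $c_4 > 0$, depending only on $d$, with
\[
    \frac{\int_0^{t_{v_0}} \indic_M(\gamma_{v_0}(s)) |\sin s|^{d-2}\,\Diff{s}}{\int_0^{t_{v_0}} |\sin s|^{d-2}\,\Diff{s}} \leq c_4 \cdot \frac{\int_0^{t_{v_0}} \indic_M(\gamma_{v_0}(s))\,\Diff{s}}{t_{v_0}} = c_4 \cdot \frac{\arc{I_{v_0} \cap M}}{\arc{I_{v_0}}}.
\]
Combining with the previous ratio inequality and rearranging gives the claim with $I = I_{v_0}$.

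The main obstacle is the uniform-in-$t_{v_0}$ comparison of weighted and unweighted integrals in the core step; everything else is a direct unpacking of \eqref{eq:polar-coordinates-formula} together with a one-line pigeonhole.
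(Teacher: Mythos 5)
Your proposal is correct and follows essentially the same route as the paper: spherical polar coordinates centered at $p$, symmetrization over $\pm v$, and a pigeonhole over directions, with your uniform estimate $t\,\sup_{[0,t]}\sin^{d-2}s \leq c_4\int_0^t \sin^{d-2}s\,\Diff{s}$ playing the role of the paper's cruder bounds $\sin t\leq t$, $\arc{I}\leq\diam(K)$ and $\diam(K)^{d-1}\asymp|K|$. The one caveat --- your exact polar-coordinate representations of $|K|$ and $|K\cap M|$ presuppose that $\{s\in[0,\pi]\colon\gamma_v(s)\in K\}$ is the initial interval $[0,t_v]$, which can fail for caps larger than a hemisphere --- is shared by the paper's own construction (where $I\subset K$ can fail in the same regime), so it is not a defect specific to your argument.
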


\begin{proof}
	Integrating with respect to the spherical polar coordinates centered at $p$, we have
	\[
		|M\cap K| = \frac{R}{2} \int_{\sphere^{d-2}_{R}}\int_{0}^{2\pi} \indic_{M\cap K}(\kappa_v(t))|\sin^{d-2}(t)|\Diff{t}\Diff{\sigma^{d-2}_R(v)}
	\]
	and there exists $v_0\in \sphere^{d-2}_{R}$ such that
	\[
		|M\cap K|\leq  \frac{R}{2}\cdot\sigma^{d-2}_R(\sphere^{d-2}_{R})\int_{0}^{2\pi} \indic_{M\cap K}(\kappa_{v_0}(t))|\sin^{d-2}(t)|\Diff{t}.
	\]
	Denoting the integral on the right hand side by $J$, it is clear that
	\[
		J = \int_{0}^{\pi} \indic_{M\cap K}(\kappa_{v_0}(t))|\sin^{d-2}(t)|\Diff{t}
		+ \int_{0}^{\pi} \indic_{M\cap K}(\kappa_{-v_0}(t))|\sin^{d-2}(t)|\Diff{t}
		=:J_1+J_2.
	\]
	This shows that there are two possible spherical line segments starting at $p$ and we choose the one that sees a larger part of the set $M$.
	More precisely, we either have $J\leq 2J_1$ or $J\leq 2J_2$; without loss of generality we  suppose $J\leq 2J_1$.
	Let $l = \sup\{t\in [0,\pi]\colon \kappa_{v_0}(t) \in K\}$ and
	$I = \trace\bigl(\kappa_{v_0}\restriction_{[0,l]}\bigr)$.
	We have $Rl=d_R(p,\kappa_{v_0}(l))\leq \diam(K)$ and thus $l\leq \diam(K)/R$.
	Hence, using this upper bound for $l$ and the simple bound $\sin(t)\leq t$, we obtain
	\eqs{
		|M\cap K|
		& \leq R\sigma^{d-2}_R(\sphere^{d-2}_R)J_1 \\
		& \leq R\sigma^{d-2}_R(\sphere^{d-2}_R)\int_{0}^l \indic_{M\cap K}(\kappa_{v_0}(t))t^{d-2}\Diff{t}\\
		& \leq \sigma^{d-2}_R(\sphere^{d-2}_R) (\diam(K)/R)^{d-2} \arc{I\cap M}
		.
	}
	Since $\arc{I}\leq \diam(K)$ and $\diam(K)^{d-1} = c_3 \abs{K}$,
	we have thus shown
	\be\label{eq:ratio-inequality}
		\frac{\arc{I\cap M}}{\arc{I}}\geq \frac{\abs{M\cap K}}{c_4 \abs{K}}
		.
		\qedhere
	\ee
\end{proof}

\subsection{Covering argument and the proof of Theorem~\ref{thm:logvinenko-sereda-sphere}}\label{ssec:proof-LS}

Let $f$ be a spherical polynomial of degree at most $\cN$.
Combining Lemma~\ref{lem:Lp-norm-by-sup} and~\ref{lem:direction}, we have already shown
\be\label{eq:sup-Lq}
	\norm{f}_{L^q(K)}\leq |K|^{1/q} \Bigl( c\cdot \frac{|K|}{|K\cap M|}\Bigr)^{2\cN}\sup_{M\cap K}|f|
\ee
with $c = 316c_4$, where $M$ is any measurable set satisfying $|K\cap M| > 0$.
Here we choose $M$ as the set of points inside the spherical cap $K$ where $|f|$ is small relative to its
own $L^q$-norm on $K$ and the measure of $S$.
That is,
\[
	M = M_{f,S}=\Bigl\{ x\in K\colon |f(x)| < |K|^{-1/q}\Bigl(\frac{|K\cap S|}{2c|K|} \Bigr)^{2\cN}\norm{f}_{L^q(K)} \Bigr\}.
\]
In what follows, we assume without loss of generality that $M_{f,S}\neq\emptyset$.
Since $M_{f,S}$ is an open set, we then have $|M_{f,S}\cap K|=|M_{f,S}|>0$.
In particular, we are in the position to apply inequality~\eqref{eq:sup-Lq} and the definition of $M_{f,S}$ to obtain
\[
	\norm{f}_{L^q(K)}\leq |K|^{1/q} \Bigl( c\cdot \frac{|K|}{|K\cap M_{f,S}|}\Bigr)^{2\cN} \sup_{M_{f,S}} |f|
	\leq \Bigl( \frac{|K\cap S|}{2|M_{f,S}|} \Bigr)^{2\cN} \norm{f}_{L^q(K)}
	.
\]
Since $M_{f,S}\neq\emptyset$, we have $\norm{f}_{L^q(K)}>0$ and therefore $|K\cap S| \geq 2|M_{f,S}|$.
Hence, $|(K\cap S)\setminus M_{f,S}| \geq |K\cap S|/2$ by the last inequality.
Since $f$ is small on $M_{f,S}$, we estimate $\norm{f}_{L^q(K\cap S)} \geq \norm{f}_{L^q((K\cap S)\setminus M_{f,S})}$
and use the lower bound for $|f|$ on $K\setminus M_{f,S}$.
Thereby we obtain
\bes
	\norm{f}_{L^q(K\cap S)} \geq\Bigl( \frac{|K\cap  S|}{2c|K|} \Bigr)^{2\cN+1/q}\norm{f}_{L^q(K)}.
\ees

This is a local variant of the Logvinenko-Sereda-Kovrijkine type inequality.
Indeed, recall that there is a radius $a>0$ such that $|K\cap S|\geq\gamma|K|$ for all spherical caps $K$ of radius $a$.
Hence, for all such $K$ the above inequality implies
\be\label{eq:capwise-Logvinenko-Sereda}
	\norm{f}_{L^q(K \cap  S)} \geq \Bigl(  \frac{\gamma}{2c} \Bigr)^{2\cN+1/q}\norm{f}_{L^q(K)}.
\ee

In order to complete the proof of Theorem~\ref{thm:logvinenko-sereda-sphere}, we choose a finite sequence of points
$(x_j)_{j=1}^m\subset\sphere^{d-1}_R$ such that the spherical caps $K_j=K(x_j,a)$ cover $\sphere^{d-1}_R$.
Moreover, we let
\be\label{eq:multiplicity}
	\kappa := \max_{y\in\sphere^{d-1}_R} \#\{j\in\{1,\dots,m\}\colon y \in K_j\}
\ee
be the maximal multiplicity of the covering, so that every point on the sphere is contained in at most $\kappa$-many caps $K_j$'s.
Using the local inequality \eqref{eq:capwise-Logvinenko-Sereda} on every cap $K_j$, we get
\[
	\norm{f}_{L^q(S)}^q \geq \frac{1}{\kappa}\sum_{j=1}^m\norm{f}_{L^q(S\cap K_j)}^q
	\geq \frac{1}{\kappa} \Bigl( \frac{\gamma}{2c}\Bigr)^{2\cN q+1} \norm{f}_{L^q(\sphere^{d-1}_R)}^q.
\]

We conclude the proof using the following lemma which is a simple consequence of results from \cite{FejesToth-64} and \cite[Theorem~1.1]{BoeroeczkyW-03}.

\begin{lemma}
	For any $d\in\NN\setminus\{1\}$ and all $R,a > 0$ there is a finite sequence of points $(x_j)_j \subset\sphere^{d-1}_R$
	such that $\sphere^{d-1}_R\subset\bigcup_j K(x_j,a)$ and the maximal multiplicity satisfies $\kappa \leq 400d\log d$.
\end{lemma}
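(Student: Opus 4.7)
The plan is to reduce the statement to a covering of the unit sphere and then quote the two cited theorems directly. Under the dilation $x \mapsto x/R$, the sphere $\sphere^{d-1}_R$ is mapped onto $\sphere^{d-1}$, and a spherical cap $K(x,a) \subset \sphere^{d-1}_R$ (which by definition has geodesic radius $\pi a$) is sent to a spherical cap on $\sphere^{d-1}$ of geodesic radius $\phi := \pi a / R$. Since both the existence of a cover and the maximal multiplicity \eqref{eq:multiplicity} are preserved under this homothety, it suffices to construct, for every $\phi > 0$, a covering of $\sphere^{d-1}$ by spherical caps of geodesic radius $\phi$ with multiplicity at most $400 d \log d$.

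I would then dispatch the trivial regime first: if $\phi \geq \pi$ (equivalently $a \geq R$), any single cap already equals the whole sphere, so a one-point cover gives $\kappa = 1$. For $\phi \in (0, \pi)$ and $d \geq 3$, Theorem~1.1 of B\"or\"oczky-Wintsche provides exactly such a covering of $\sphere^{d-1}$ by congruent caps of angular radius $\phi$ with multiplicity bounded by $400 d \log d$; this is the statement from which the constant in the lemma is inherited. The remaining case $d = 2$ is elementary: one covers $\sphere^1$ by evenly spaced arcs of geodesic length $2\pi\phi$, producing a multiplicity of at most $2$, which is well below $400 \cdot 2 \cdot \log 2$. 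The classical covering theory of Fejes T\'oth supplies the general framework in which these quantitative density/multiplicity bounds are derived, and ensures that B\"or\"oczky-Wintsche's bound applies throughout the stated range of $\phi$.

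Pulling the resulting cover back through the homothety yields the desired family $\{K(x_j, a)\}$ on $\sphere^{d-1}_R$ with multiplicity $\kappa \leq 400 d \log d$. There is no serious obstacle here; the only task is the bookkeeping of matching the numerical constant in the B\"or\"oczky-Wintsche theorem with the statement and checking that the construction is uniform in $\phi \in (0,\pi)$. Since both cited results are invoked as black boxes, no additional argument is required beyond the rescaling reduction and the trivial cases at the endpoints $d = 2$ and $\phi \geq \pi$.
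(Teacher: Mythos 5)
Your proposal matches the paper, which gives no separate proof of this lemma but derives it exactly as you do: rescale to the unit sphere and invoke Fejes T\'oth and B\"or\"oczky--Wintsche \cite{FejesToth-64,BoeroeczkyW-03} as black boxes for the multiplicity bound $400d\log d$, with the degenerate large-radius and low-dimensional cases being trivial. The only bookkeeping point to watch is that the B\"or\"oczky--Wintsche theorem is stated for caps of angular radius below $\pi/2$, so the intermediate regime of large radii should be dispatched as trivially as your $\phi\geq\pi$ case (e.g., two antipodal caps), which poses no difficulty.
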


With this lemma, we finally derive
\[
	\norm{f}_{L^q(S)} \geq \Bigl(\frac{\gamma}{c_5}\Bigr)^{2\cN +1/q}\norm{f}_{L^q(\sphere^{d-1}_R)}.
\]
This finishes the proof of Theorem~\ref{thm:logvinenko-sereda-sphere}.

\subsection{Proof of the spectral inequality in Corollary~\ref{cor:spectral-inequality-Laplace-Beltrami}}\label{ssec:spectral-inequality}

Recall that the eigenfunctions of the Laplace-Beltrami operator on the unit sphere are given by the spherical harmonics $Y_{\ell,k}$, $\ell\in\NN$, $k=1,\dots,n_\ell$,
where $\ell$ denotes the degree and $n_\ell$ the multiplicity.
It is well-known that $(Y_{\ell,k})_{\ell,k}$ forms an orthonormal basis of $L^2(\sphere^{d-1})$ and that
$-\Delta Y_{\ell,k} = \ell(\ell+d-2)Y_{\ell,k}$ for all $\ell\in\NN$ and $k=1,\dots,n_\ell$.
We let $Y_{\ell,k,R}(x)=Y_{\ell,k}(x/R)$ for $x\in\sphere^{d-1}_R$.
Then a simple scaling argument shows that the functions $(Y_{\ell,k,R})_{\ell,k}$ are the eigenfunctions of the Laplace-Beltrami on
the sphere $\sphere^{d-1}_R$.
More precisely,
\be\label{eq:spherical-harmonics-R-sphere}
	-\Delta Y_{\ell,k,R} =R^{-2}\ell(\ell+d-2)Y_{\ell,k,R}\quad\text{for all}\quad\ell\in\NN,\quad k=1,\dots,n_\ell
	,
\ee
and the family $\{Y_{\ell,k,R}\colon\ell\in\NN, k=1,\dots n_\ell\}$ forms an orthonormal basis of $L^2(\sphere^{d-1}_R)$.

In order to prove Corollary~\ref{cor:spectral-inequality-Laplace-Beltrami}, note that
each $(Y_{\ell,k,R})$ is a spherical polynomial of degree at most $\ell$, so that \eqref{eq:spherical-harmonics-R-sphere}
shows that for all $Y_{\ell,k,R}\in \Ran P_{-\Delta}((-\infty,E])$ we have $\ell\leq RE^{1/2}$.
Hence, $f\in \Ran P_{-\Delta}((-\infty,E])$ (being a finite linear combination of $Y_{\ell,k,R}$ with $\ell\leq RE^{1/2}$)
is a spherical polynomial of degree at most $RE^{1/2}$.

Thus, Theorem~\ref{thm:logvinenko-sereda-sphere} implies \eqref{eq:Logvinenko-Sereda-Laplace-Beltrami}
and Corollary~\ref{cor:spectral-inequality-Laplace-Beltrami}.

%
%%%%%%%%%%%%%%%%%%%%%%%%%%
%%%%%%% DISCUSSION %%%%%%%
%%%%%%%%%%%%%%%%%%%%%%%%%%
%

\section{Discussion of the covering argument}

For every $\gamma_0 > 0$ and every $\gamma_0$-thick set $S\subset \sphere^{d-1}_R$ there exists a scale $a_0>0$ such that $|S \cap K| \geq \gamma_0 |K|$
for all spherical caps of radius $a_0$.
On the other hand, we have $|S| > 0$ and hence $\gamma_1 := |S|/|\sphere^{d-1}_R| > 0$.
This results in two competing estimates in Theorem~\ref{thm:logvinenko-sereda-sphere} with constants
\be\label{eq:comparison}
	\Bigl(\frac{c_1}{\gamma_0}\Bigr)^{2\cN+1/q}
	\quad \text{and} \quad
	\Bigl(\frac{c_1}{\gamma_1}\Bigr)^{2\cN+1/q},
\ee
respectively.
Furthermore, for a covering $(K_j)_j$ of $\sphere^{d-1}_R$ with spherical caps of radius $a_0$ we have
\bes
	|\sphere^{d-1}_R| \leq \sum_j |K_j| \leq \sum_j \frac{1}{\gamma_0}|S\cap K_j|
	\leq \frac{\kappa}{\gamma_0} |S\cap\cup_j K_j| = \frac{\kappa}{\gamma_0} |S|
	,
\ees
where $\kappa$ is the multiplicity from \eqref{eq:multiplicity} (depending only on the dimension).
Hence, $1/\gamma_1 \leq \kappa/\gamma_0$ and the second constant in \eqref{eq:comparison}
can be brought into the form of the first one by increasing $c_1$, that is
\be
\Bigl(\frac{c_1}{\gamma_1}\Bigr)^{2\cN+1/q}
	\leq
	\Bigl(\frac{\kappa c_1}{\gamma_0}\Bigr)^{2\cN+1/q}
\ee
and $c_1$ is indeed allowed to depend on the dimension (only).
This illustrates that the covering argument above merely improves the constant $c_1$, but not the qualitative features of the control cost bound, when compared to the situation if we consider exclusively $K = \sphere^{d-1}_R$.

This stands in contrast to the Euclidean estimate (cf.~Theorem~\ref{thm:Logvinenko-Sereda-RRd}), where thick sets with small scales $a$ yield a
substantially better bound since $a$ also appears as a (small) factor in the exponent.
We are not (yet) able to reproduce this exponential scaling with respect to $a$ in the spherical setting (cf.~Theorem~\ref{thm:logvinenko-sereda-sphere}).
For this purpose it would be necessary to replace inequality~\eqref{eq:Lp-norm-by-sup} by a bound where the exponent exhibits a factor $a$.

\subsection*{Acknowledgments.}
The authors would like to thank M.~K\"amper for a careful reading of the manuscript and A.~Seelmann for many fruitful discussions.
A.D.~has been partially supported by the DFG grant VE 253/10-1 entitled
\emph{Quantitative unique continuation properties of elliptic PDEs with variable 2nd order coefficients and applications in control theory, Anderson localization, and photonics}.
%
%%%%%%%%%%%%%%%%%%%%%%%%%%%%%%%%
%%%%%%%%% BIBLIOGRAPHY %%%%%%%%%
%%%%%%%%%%%%%%%%%%%%%%%%%%%%%%%%
%
%\bibliography{ILit-23,logvinenko-sereda-sphere} \bibliographystyle{alpha}

\newcommand{\etalchar}[1]{$^{#1}$}
\def\polhk#1{\setbox0=\hbox{#1}{\ooalign{\hidewidth
  \lower1.5ex\hbox{`}\hidewidth\crcr\unhbox0}}}

\end{document}